\DeclareMathOperator{\Tr}{Tr}
\newcommand{\abcd}{\begin{pmatrix}a&b\\c&d\end{pmatrix}}
\newcommand{\bs}{{\boldsymbol\sigma}}
\newcommand{\bt}{{\boldsymbol\tau}}
\renewcommand{\u}{\mathbf{u}}
\newcommand{\x}{\mathbf{x}}
\newcommand{\occ}{\CIRCLE}
\newcommand{\vac}{\Circle}
\newtheorem{lemma}{Lemma}
\newcommand{\Ref}[1]{(\ref{#1})}
\newcommand{\nn}{\nonumber \\}
\begin{document}

\title{Upper bounds on the growth rates of hard squares and related models via corner transfer matrices}
\author{Yao-ban Chan$^1$\thanks{E-mail: \texttt{yaoban.chan@uq.edu.au}}~and Andrew Rechnitzer$^2$ \\ \\ \normalsize $^1$School of Mathematics and Physics, The University of Queensland, Brisbane, Australia \\ \normalsize $^2$Department of Mathematics, University of British Columbia, Vancouver, Canada}

\maketitle

\begin{abstract}
  \textbf{Abstract.}
  We study the growth rate of the hard squares lattice gas, equivalent to the number of independent sets on the square lattice, and two related models --- non-attacking kings and read-write isolated memory. We use an assortment of techniques from combinatorics, statistical mechanics and linear algebra to prove upper bounds on these growth rates. We start from Calkin and Wilf's transfer matrix eigenvalue bound, then bound that with the Collatz-Wielandt formula from linear algebra. To obtain an approximate eigenvector, we use an ansatz from Baxter's corner transfer matrix formalism, optimised with Nishino and Okunishi's corner transfer matrix renormalisation group method. This results in an upper bound algorithm which no longer requires exponential memory and so is much faster to calculate than a direct evaluation of the Calkin-Wilf bound. Furthermore, it is extremely parallelisable and so allows us to make dramatic improvements to the previous best known upper bounds. In all cases we reduce the gap between upper and lower bounds by 4-6 orders of magnitude.

  \textbf{Keywords.} Transfer matrix, growth rate, hard squares, upper bound
\end{abstract}

\section{Introduction}
We study the growth rate of the statistical mechanical model of a hard square lattice 
gas. In this model, the vertices of a two-dimensional square lattice are 
either occupied by a gas particle or vacant, denoted by $\occ$ and $\vac$ respectively. 
However, no two occupied vertices may be immediate neighbours. We can 
think of each occupied vertex as being covered by a square (rotated $45^\circ$ and with 
side-length $\sqrt{2}$). The constraint forbids these squares from 
overlapping, giving rise to the name hard squares. 

In one dimension, the hard square lattice gas can be used as a simple model of data 
storage on a magnetic tape. As the tape is read from left to right, a vacant vertex 
indicates that the stored field is unchanged, while an occupied vertex indicates that the 
field flips. To avoid potential \emph{intersymbol interference}, we forbid field flips from occurring in close succession --- hence 
$\occ\occ$ is forbidden \cite{Lind1995}. Because of this connection with magnetic fields, the vertices and their states are often referred to as \emph{spins}.

The storage capacity of a length of tape is dependent on the number of legal configurations of the spins it contains.
Accordingly, we define the \emph{partition function}, which is simply the total number of 
possible configurations: 
\begin{align}
 Z_N = \text{\# of legal configurations of $N$ spins}.
\end{align}
The partition function typically grows exponentially with respect to the number of 
vertices, and so a more appropriate measure for the number of possible configurations is 
the \emph{partition function per site} or \emph{growth rate}
\begin{align}
\kappa = \lim_{N \rightarrow \infty} Z_N^{1/N}.
\end{align}
The \emph{capacity} of the system is the number of independent bits of storage per vertex in the system. This is directly related to $\kappa$ by
\begin{align}
\text{capacity} = \log_2 \kappa.
\end{align}

While growth rates for one-dimensional systems can be readily computed as the dominant eigenvalue of a finite transfer matrix\footnote{This implies that $\kappa$ for a 1d system is a Perron number. Lind \cite{Lind1983} proved that any Perron number is also the growth rate of some similar 1d system.}, the corresponding problem for two-dimensional systems is typically very hard. Despite considerable effort, exact growth rates are only known for a very small number of systems\footnote{For example, the fully-packed dimer model \cite{Kasteleyn1961, Temperley1961}, 3-colourings of the square grid \cite{Lieb1967}, and the ``odd'' model \cite{louidor2010}.}. Indeed, it has been shown by Berger \cite{Berger1966} that there exist systems for which it is undecidable if there exist any valid configurations at all. Because of this, computing growth rates has become something of a numerical challenge and has been studied extensively for many different models under numerous guises. 

\begin{figure}
 \begin{center}
  \includegraphics[width=0.65\textwidth]{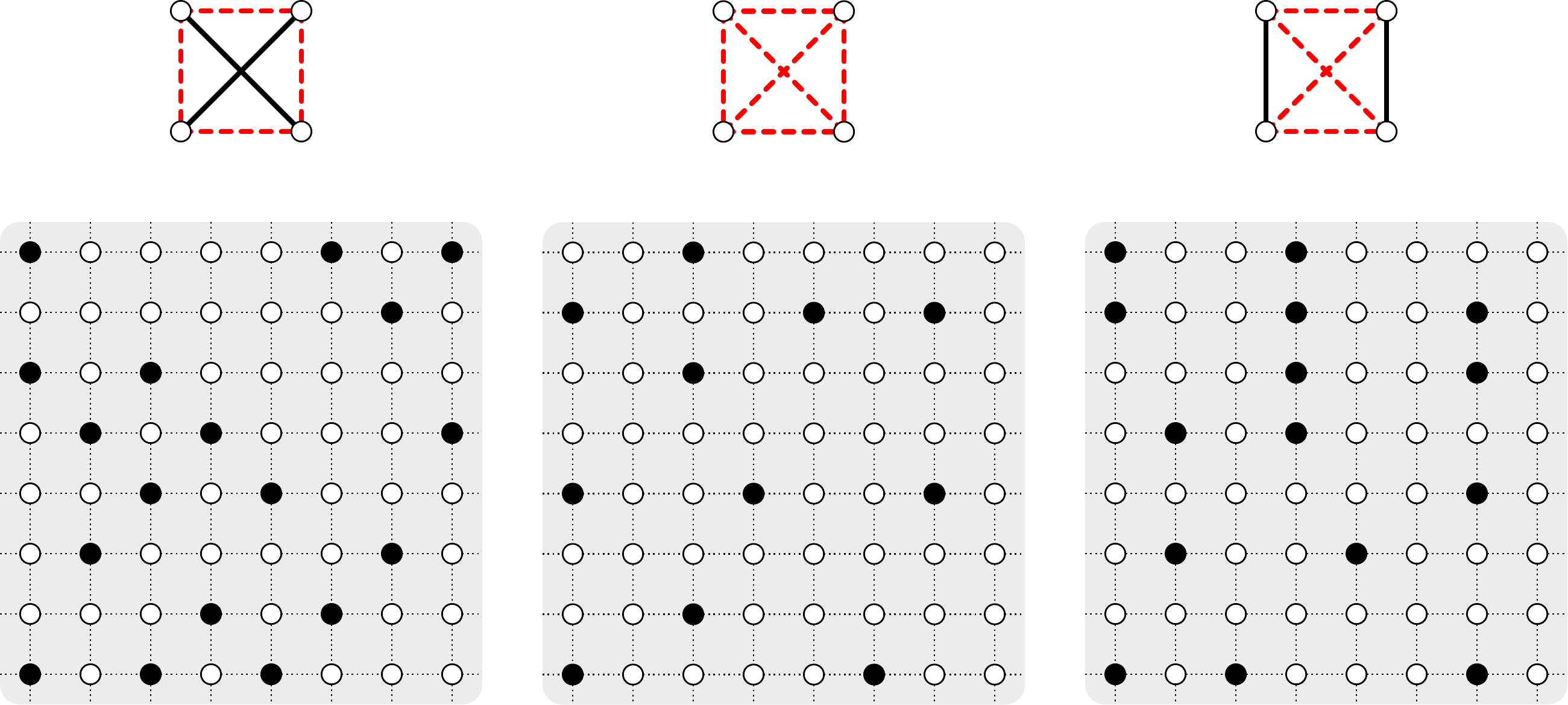}
 \end{center}
  \caption{The models that we study in this paper. The above diagrams show the restrictions on adjacent $\occ$ spins --- dashed red lines are disallowed and solid black lines are allowed. The lower diagrams show example configurations. From left to right: 
hard squares, non-attacking kings (NAK) and read-write isolated memory (RWIM).}
\label{fig exc models}
\end{figure}

One of the most-studied models in this area is the aforementioned hard squares model \cite{CalkinWilf, 
louidor2010, Friedland2010}. This model is well-studied in statistical 
mechanics\footnote{It is a close relative of the famous hard hexagons model 
which was solved exactly by Baxter \cite{Hardhex}.}, not just for the combinatorics of 
the number of configurations, but also for its macroscopic behaviour as the relative weighting of 
$\occ$ and $\vac$ spins changes. Obtaining very precise upper bounds for the growth rate of this 
model is the main focus of this paper.

We also consider two related models which forbid certain local configurations 
of $\occ$ spins (see Figure~\ref{fig exc models}). The \emph{non-attacking kings} model 
forbids horizontal, vertical or diagonal adjacency of $\occ$ spins --- they can be 
considered as kings on a chessboard which cannot be placed in such a way that one attacks 
another. The \emph{read-write isolated memory} model \cite{RWIM1, RWIM2} forbids two 
horizontally or diagonally adjacent $\occ$ spins (vertical adjacency is allowed). If we 
consider the two-dimensional array of spins as a horizontal line of spins evolving in time 
(each row separated by 1 time unit), then the RWIM constraint can be viewed as forbidding us 
from altering two adjacent spins in one time step. 

It is not at all clear that tractable closed-form expressions for $\kappa$ exist for these models, 
and none are known. There is a significant body of work on finding rigorous bounds on 
$\kappa$, most of which are based on the analysis of transfer matrices 
(see Figure~\ref{fig tm1d2d}). Rather than writing down a transfer matrix for an 
infinite two-dimensional system --- an obviously impossible task --- we consider the system on an infinite strip of finite width, say $w$ vertices.
\begin{figure}
 \begin{center}
  \includegraphics[width=0.6\textwidth]{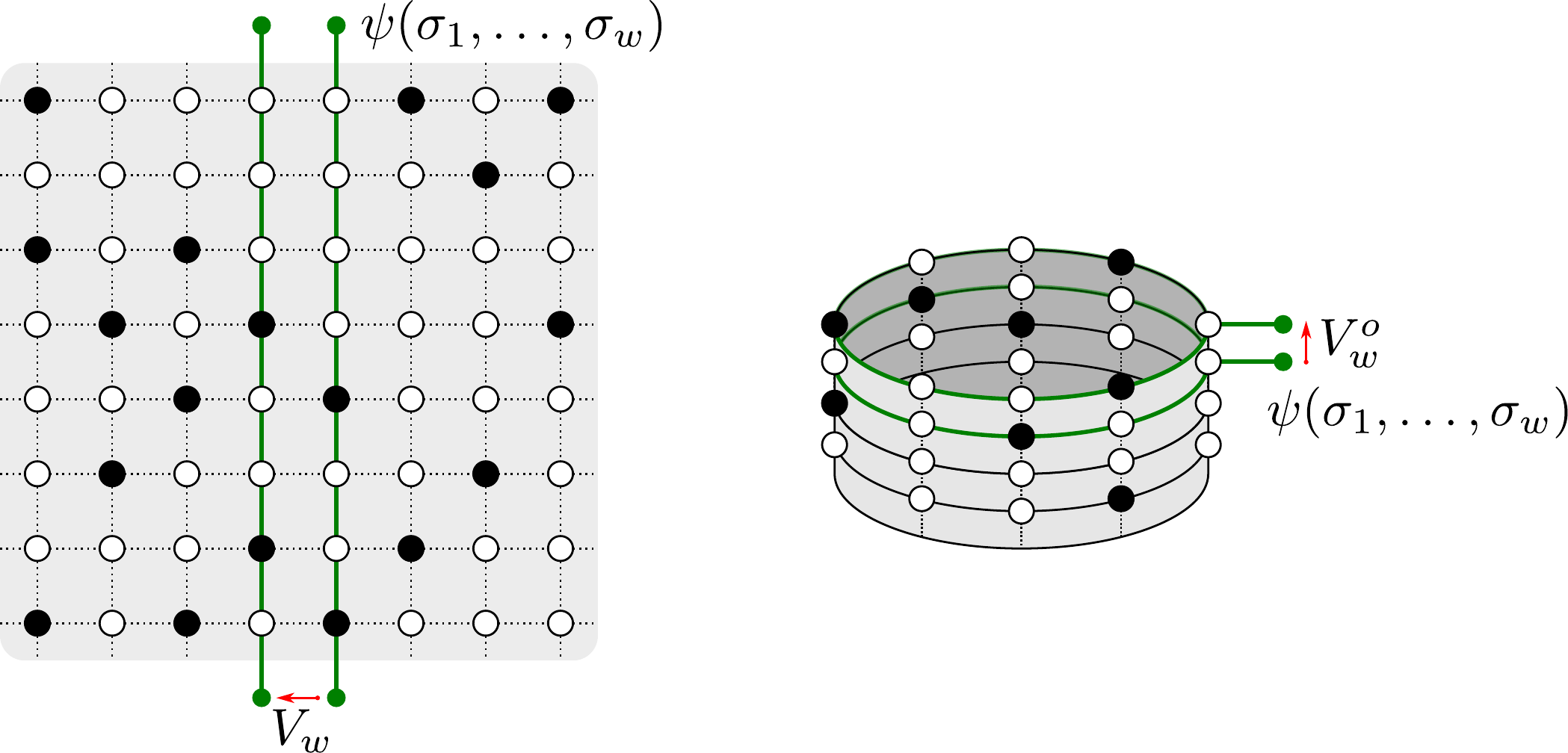}
 \end{center}
 \caption{The transfer matrix $V_w$ extends a system of $N$ columns to a system of $N+1$ columns by using the state of the $N$th column $(\sigma_1,\ldots,\sigma_w)$. The transfer matrix $V^o_w$ constructs the same system on a cylinder.}
\label{fig tm1d2d}
\end{figure}
Let $V_w$ be the column transfer matrix associated with this strip, and $\Lambda(w)$ its dominant 
eigenvalue. Then 
\begin{align}
  \label{eqn kappa defn}
  \kappa &= \lim_{w \to \infty} \Lambda(w)^{1/w}.
\end{align}
Lower bounds for $\kappa$ can then be computed using a clever formula based on Rayleigh 
quotients due to Engel \cite{Engel1990}, and Calkin and Wilf \cite{CalkinWilf} (we refer 
the reader to the latter paper for its proof): 
\begin{align}
  \kappa^p & \geq \frac{\Lambda(p+2q)}{\Lambda(2q)} & \text{with } p,q >0.
  \label{eqn lower}
\end{align}
Calkin and Wilf also proved the following upper bound on $\kappa$, and we again refer the reader to their paper for its proof:
\begin{align}
  \kappa^{2p} & \leq \Lambda^o(2p) & \text{with } p>0,
  \label{eqn upper}
\end{align}
where $\Lambda^o(w)$ is the dominant eigenvalue of a related matrix, $V^o_w$, the 
transfer matrix for the same system but with cylindrical boundary conditions (see 
Figure~\ref{fig tm1d2d}).

Almost all works which compute bounds on the growth rates of the hard squares and related models use the above two inequalities (two recent exceptions being \cite{Gamarnik2009} and previous work by the authors \cite{lowerbounds}, both of which use methods from statistical physics). However, the practical application of these inequalities is quickly hampered by the exponential growth of the size of transfer matrix with respect to strip width.\footnote{For the three models we consider, the matrices grow as the Fibonacci numbers.} There are creative methods to avoid storing the full matrix, perhaps the most successful being a matrix compression method due to Lundow and Markstr\"om \cite{Lundow}. Together with Friedland \cite{Friedland2010}, they used this method to compute $\Lambda(28)$ and $\Lambda^o(36)$, which in turn allowed them to exactly determine the first 15 digits of $\kappa$ for the hard squares model:
\begin{align}
  \kappa &=  1.503\,048\,082\,475\,33\dots
\end{align}
While this compression method greatly decreases the time and memory needed, the 
requirements still grow exponentially with strip width.

In this work, we compute upper bounds on the growth rates of the models. We start with equation~\Ref{eqn upper}, but 
rather than computing $\Lambda^o(w)$ exactly, we compute an upper bound on it using an 
approximation of the dominant eigenvector of $V^o_w$ and the Collatz-Wielandt formula 
\cite{collatz,wielandt} (which we include as Lemma~\ref{lem:collatz}). To form the approximate 
eigenvector, we use corner transfer matrix formalism. This is a very powerful 
approach developed in statistical mechanics by Baxter \cite{BaxterDimers,CTM:1,CTM:Hsq} as a way to estimate the partition function of various 
lattice models, both numerically and via series expansions \cite{CTM:Hsq,CTM:Hardsq,chan2013series}. 

The CTM formalism allows us to express each component of the approximate eigenvector as 
the trace of a product of auxiliary matrices. This means that we are no longer required to 
store either the transfer matrix or vector; we can compute the vector component by component as the 
Collatz-Wielandt formula requires. To optimise the choice of vector (and the associated 
auxiliary matrices), we use an extension of CTM known as the corner transfer matrix 
renormalisation group (CTMRG) method, developed by Nishino and Okunishi \cite{CTM:RG1,CTM:RG2}.

\section{Upper bounds on upper bounds}

We start by restating Calkin and Wilf's upper bound on the growth rate $\kappa$.
\begin{lemma}[\cite{CalkinWilf}]
\label{lem:calkinwilf}
  Let $V^o_w$ be the column transfer matrix for a system of width $w$ with cylindrical 
boundary conditions, and let $\Lambda^o(w)$ be its dominant eigenvalue. Then for any even integer $m>0$,
\begin{equation}
  \kappa^{m} \leq \Lambda^o(m). \label{eq:calkinwilf}
\end{equation}
\end{lemma}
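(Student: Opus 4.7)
The plan is to bound $\kappa^m$ and $\Lambda^o(m)$ from above and below, respectively, by the toroidal partition function $Z^T(m,n) := \Tr((V^o_m)^n)$, which counts hard-square (or NAK, or RWIM) configurations on the $m \times n$ torus, and then to let $n \to \infty$.

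For the upper bound on $Z^T$: since $V^o_m$ is symmetric and non-negative, Perron--Frobenius says its eigenvalues $\lambda_i$ are all real with $|\lambda_i| \leq \Lambda^o(m)$, so for even $n$,
$$
Z^T(m, n) \;=\; \sum_i \lambda_i^n \;\leq\; d_m\, \Lambda^o(m)^n,
$$
where $d_m = \dim V^o_m$. For the matching lower bound $Z^T(m,n) \geq \kappa^{mn}$ (up to a sub-exponential factor), I would embed a large free-boundary rectangle into the torus, padding the wrap with empty rows/columns, and then use Fekete subadditivity applied to $\log i(a,b)$ (the free-BC partition function on an $a \times b$ rectangle) to obtain $i(a,b) \geq \kappa^{ab}$. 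Combining the two bounds gives $\kappa^{mn} \leq d_m\, \Lambda^o(m)^n \cdot (1 + o(1))$; taking the $n$-th root and letting $n \to \infty$ kills the $d_m^{1/n}$ factor and yields $\kappa^m \leq \Lambda^o(m)$.

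The main obstacle is getting the exponent in the lower bound to be exactly $mn$ rather than something smaller. A naive buffer of one empty row and one empty column reduces the usable interior to an $(m-1) \times (n-1)$ rectangle, which gives only $Z^T(m,n) \geq \kappa^{(m-1)(n-1)}$ and loses a factor $\kappa^{m+n-1}$ in the exponent. The evenness of $m$ must be genuinely essential here: one can check numerically that $\Lambda^o(3) \approx 3.30 < \kappa^3 \approx 3.40$, so the inequality really does fail for odd $m$. A valid proof must therefore exploit a structural feature of the even-$m$ case --- most plausibly the bipartite $2$-colouring of $\mathbb{Z}^2$, which descends consistently to the $m \times n$ torus precisely when $m$ (and $n$) is even --- to construct a denser family of valid torus configurations than the naive buffer allows, thereby recovering the full exponent $mn$.
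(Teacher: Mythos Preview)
The paper does not prove this lemma; it cites Calkin and Wilf and explicitly defers to their paper for the argument. So the comparison is against the Calkin--Wilf proof.

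Your overall framework is the right one, and it is exactly the framework Calkin and Wilf use: sandwich the torus partition function $Z^T(m,n)=\Tr\bigl((V^o_m)^n\bigr)$ between a lower bound growing like $\kappa^{mn}$ and an upper bound $d_m\,\Lambda^o(m)^n$, then send $n\to\infty$. Your upper bound on $Z^T$ is fine, and your Fekete argument that $i(a,b)\ge\kappa^{ab}$ for the free-boundary rectangle is also correct. Your numerical check that the bound fails for $m=3$ is correct as well.

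The gap is precisely where you flagged it, and the speculation about the bipartite $2$-colouring does not close it. Embedding a buffered rectangle can never recover the full exponent $mn$ for fixed $m$, because the buffer column costs you a full factor $\kappa^{n}$, which is \emph{not} subexponential in $n$; you end up proving only $\Lambda^o(m)\ge\kappa^{m-1}$. The missing idea has nothing to do with the checkerboard colouring. It is the \emph{symmetry of the torus}: $Z^T(m,n)=Z^T(n,m)=\Tr\bigl((V^o_n)^m\bigr)$. Since $V^o_n$ is real symmetric, its eigenvalues are real, and for \emph{even} $m$ every term in $\sum_j \mu_j^m$ is non-negative, giving
\[
Z^T(m,n)\;=\;\Tr\bigl((V^o_n)^m\bigr)\;\ge\;\Lambda^o(n)^m .
\]
This is where the parity of $m$ actually enters. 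Combining with your upper bound and taking $n$-th roots yields $\Lambda^o(n)^{m/n}\le d_m^{1/n}\,\Lambda^o(m)$; letting $n\to\infty$ and using $\Lambda^o(n)^{1/n}\to\kappa$ (which follows from $\Lambda(n-1)\le\Lambda^o(n)\le\Lambda(n)$) gives $\kappa^m\le\Lambda^o(m)$. So the fix is to replace your rectangle-embedding lower bound on $Z^T(m,n)$ by the eigenvalue lower bound coming from reading the same trace in the orthogonal direction.
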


As noted in the introduction, almost all upper bounds in the literature are derived 
using Lemma \ref{lem:calkinwilf} by exact calculation of the dominant eigenvalue of $V^o_w$. As such they are restricted, both in time and memory, by the exponential growth of the transfer matrix size. We also start with this bound, but do not compute the eigenvalue exactly.
Instead we find upper bounds for $\Lambda^o(m)$ using the Collatz-Wielandt 
formula \cite{collatz,wielandt}.
\begin{lemma}[Collatz-Wielandt formula]
  \label{lem:collatz}
  Let $A$ be an irreducible square matrix with non-negative entries. Then 
for any vector $\x > 0$, the largest eigenvalue of $A$ 
(denoted $\lambda$) is real and positive and is bounded by
\begin{equation}
  \min_i \frac{(A\x)_i}{x_i} \leq \lambda \leq \max_i \frac{(A\x)_i}{x_i}. \label{eq:collatz}
\end{equation}
\end{lemma}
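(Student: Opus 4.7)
The plan is to derive the inequality from the Perron--Frobenius theorem for irreducible non-negative matrices. First I would invoke Perron--Frobenius to obtain two standard facts about such an $A$: the spectral radius $\lambda$ is a real positive eigenvalue, and both $A$ and its transpose admit strictly positive eigenvectors for $\lambda$. In particular, there exists a strictly positive left eigenvector $\u > 0$ satisfying $\u^T A = \lambda \u^T$.

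Given the arbitrary positive vector $\x > 0$, the heart of the argument is to evaluate the scalar $\u^T A \x$ in two different ways. Using the left eigenvector relation gives
\begin{equation*}
\u^T A \x \;=\; (\u^T A)\x \;=\; \lambda \sum_i u_i x_i,
\end{equation*}
while expanding $(A\x)_i$ componentwise yields
\begin{equation*}
\u^T A \x \;=\; \sum_i u_i (A\x)_i \;=\; \sum_i (u_i x_i) \cdot \frac{(A\x)_i}{x_i}.
\end{equation*}
Equating the two expressions exhibits $\lambda$ as a weighted average of the ratios $(A\x)_i/x_i$ with strictly positive weights $u_i x_i$. Since any weighted average with positive weights lies between the minimum and maximum of the quantities being averaged, the two-sided bound in \Ref{eq:collatz} follows immediately.

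The only substantive step is the appeal to Perron--Frobenius, specifically the existence of a strictly positive left eigenvector for the dominant eigenvalue of an irreducible non-negative matrix. This is a classical result, typically proved by a fixed-point argument on the probability simplex or by first establishing the statement for strictly positive matrices and then passing to the limit under a small perturbation $A + \epsilon J$ (with $J$ the all-ones matrix). Since we need only \emph{existence} of the positive left eigenvector --- not uniqueness or algebraic simplicity --- a citation to any standard reference on non-negative matrices will suffice, and the remainder of the proof is the short averaging computation sketched above.
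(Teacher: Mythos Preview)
Your proof is correct and follows essentially the same approach as the paper: invoke Perron--Frobenius to obtain a strictly positive left eigenvector $\u$, then use $\u^T A \x = \lambda\,\u^T \x$ to exhibit $\lambda$ as a positively weighted combination of the ratios $(A\x)_i/x_i$. The only cosmetic difference is that the paper phrases the conclusion as ``some summands of $\sum_i u_i x_i\bigl((A\x)_i/x_i - \lambda\bigr) = 0$ must be non-negative and some non-positive,'' whereas you say ``$\lambda$ is a weighted average,'' which amounts to the same thing.
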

\begin{proof}
 Let $\u$ be the left eigenvector of $A$ corresponding to $\lambda$. By 
the Perron-Frobenius theorem, $\lambda$ is real and positive and $\u$ has strictly 
positive  entries. Then for any $\x > 0$,
  \begin{align}
    \Braket{\u | A | \x} & = \lambda \Braket{\u | \x} 
  \end{align}
  and so
  \begin{align}
    \Braket{\u | A\x - \lambda \x } & = \sum_{i=1}^n u_i x_i \left( 
\frac{(A\x)_i}{x_i} - \lambda \right) = 0.
\end{align}
Some terms in the sum must be non-negative and some non-positive (although they may all be 0 if $\x\propto\u$), and therefore the maximum of the summands is non-negative and the minimum is non-positive. Since $\u, \x > 0$, the result follows.

\end{proof}

We use this lemma to find an upper bound on $\Lambda^o(m)$. In order to calculate a tight bound, we need to get as close as possible to the dominant eigenvector of $V^o_m$. We do this using Baxter's corner transfer matrix ansatz, which specifies to choose a vector $\psi$ of the form
\begin{align}
  \psi_{\bs} = \Tr \left[ F(\sigma_1,\sigma_2) F(\sigma_2,\sigma_3) \dots F(\sigma_m \sigma_1)\right], \label{eq:psi}
\end{align}
where the index $\bs = (\sigma_1,\dots,\sigma_m) \in \{\occ,\vac\}^m$ runs over all legal configurations of one row of the cylinder, and 
$\{F(a,b)\}$ is a set of four matrices of size $n \times n$, indexed by two spin values $a$ and $b$. Here $n$ is an arbitrary positive integer which need not be related to $m$. The $F$ matrices are calculated from corner transfer matrix methods; the process is quite involved, and we leave its description until the next section. We can interpret this ansatz pictorially by thinking of the $F$ matrices as half-row transfer matrices which build up the state $\psi_\bs$ one row at a time --- see Figure~\ref{fig:VandF}.

\begin{figure}
  \begin{center}
    \includegraphics[width=0.9\textwidth]{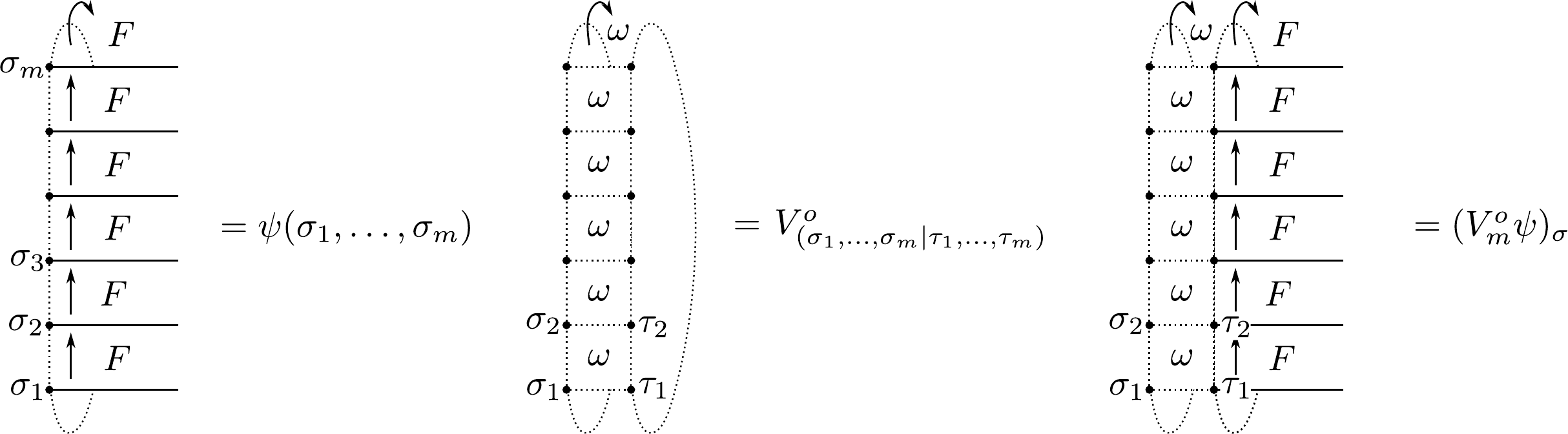}
  \end{center}
  \caption{Pictorial interpretation of \Ref{eq:psi},~\Ref{eqn V in omega} and~\Ref{eqn Vpsi}.}
  \label{fig:VandF}
\end{figure}

In order to use \Ref{eq:collatz}, we must also be able to compute $\left( V^o_m\psi \right)_\bs$, and to do this we define the \emph{face weight} $\omega$. The weight of a face (a single cell of the lattice) is 1 when the spins around it form a legal configuration and 0 otherwise. The element of $V^o_m$ which maps a column in state $\bt$ to a column in state $\bs$ is then given by the product
\begin{align}
  \label{eqn V in omega}
  (V^o_m)_{\bs,\bt} &=
\prod_{i=1}^m \omega\begin{pmatrix} \sigma_{i+1} & \tau_{i+1} \\ 
\sigma_i & \tau_i \end{pmatrix}, & \text{where } 
\begin{array}{rl}
\sigma_{m+1}&\equiv\sigma_1, \text{ and}\\ 
\tau_{m+1}&\equiv\tau_1.
\end{array}
\end{align}
This is shown pictorially in Figure~\ref{fig:VandF}. We can then write the action of $V^o_m$ on $\psi$ as 
\begin{align}
  \left( V^o_m\psi \right)_\bs & = \sum_\bt \left( V^o_m \right)_{\bs,\bt} \psi_\bt = \sum_\bt \prod_{i=1}^m \omega\begin{pmatrix} \sigma_{i+1} & \tau_{i+1} \\ \sigma_i & 
\tau_i \end{pmatrix} \Tr \left[ F(\tau_1,\tau_2) F(\tau_2,\tau_3) \dots F(\tau_m 
\tau_1)\right] \nn
    & = \Tr \sum_\bt \prod_{i=1}^m \omega\begin{pmatrix} \sigma_{i+1} & \tau_{i+1} \\ 
\sigma_i & \tau_i \end{pmatrix} F(\tau_i, \tau_{i+1}) = \Tr \left[ F_l(\sigma_1,\sigma_2) F_l(\sigma_2,\sigma_3) \dots F_l(\sigma_m 
\sigma_1)\right], \label{eqn Vpsi}
\end{align}
where $F_l$ is a $2n \times 2n$ matrix defined by the block matrix equation (shown pictorially in Figure~\ref{fig:enlarge})
\begin{align}
  \left.F_l(c,a)\right|_{d,b} & = \omega \abcd F(d,b). \label{eq:fl}
\end{align}

We are now able to combine the above expression with \Ref{eq:calkinwilf} and \Ref{eq:collatz} to derive
\begin{align}
  \kappa^m \leq \Lambda^o(m) & \leq \max_{\bs} \frac{\left( V^o_m\psi \right)_\bs}{\psi_{\bs}} 
  = \max_{\bs} \left\{ \frac{ \Tr \left[ F_l(\sigma_1,\sigma_2) F_l(\sigma_2,\sigma_3) 
\dots F_l(\sigma_m \sigma_1)\right] }{ \Tr \left[ F(\sigma_1,\sigma_2) 
F(\sigma_2,\sigma_3) \dots F(\sigma_m \sigma_1)\right] } \right\}. \label{eq:upper}
\end{align}
This is the upper bound that we use for $\kappa$, and it is valid for any $m$, $n$, and $F$, as long as we satisfy the following conditions:
\begin{itemize}
  \item $V^o_m$ is non-negative. In fact $V^o_m$ is a 0-1 matrix for the considered models, so this follows immediately.
  \item $V^o_m$ is irreducible. This is equivalent to showing that every set of states 
$\bs$ on a cut can be reached from any other. This is easy to show for our 
models, as there is no restriction on adjacency to $\vac$ spins --- so any $\bs$ can be 
adjacent to $\{\vac,\vac,\ldots,\vac\}$, and thus can reach any other set of states.
  \item $\psi_\bs$ is positive. This does not immediately follow from (\ref{eq:psi}). We 
must show this by explicitly computing (\ref{eq:psi}) for every $\bs$ and verifying that 
it is positive. However, this does not result in any extra work since we already need these values to compute (\ref{eq:upper}).
\end{itemize}

Putting all of this together, we find an upper bound as follows:
\begin{enumerate}
  \item Select $m, n > 0$ with $m$ even.
  \item Calculate a set of $n \times n$ matrices $\{F(a,b)\}$ as specified in the next section.
  \item For all possible cut states $\bs$ of $m$ spins: \label{step:calc}
    \begin{enumerate}
	 \item Verify $\psi_\bs > 0$, where $\psi_\bs$ is given by (\ref{eq:psi}).
	 \item Calculate $\left(\frac{\left( V^o_m\psi \right)_\bs}{\psi_{\bs}}\right)^{1/m}$. \label{step:bound}
    \end{enumerate}
  \item The upper bound for $\kappa$ is the maximum of all values calculated in step \ref{step:bound}.
\end{enumerate}
This method uses very little memory, because we do not need to keep the entire $\psi$ vector in memory. Each component can be computed independently as it is needed from the $F$ and $F_l$ matrices. These matrices are tiny compared to $\psi$ and $V^o_m$, so this is a very modest requirement. Furthermore, the calculation in step~\ref{step:calc} for any particular component does not depend on any other component, so can be done in parallel (and ratios compared afterwards). This step is by far the most time-consuming part of the process --- the number of traces required grows exponentially with $m$. Hence parallelisation creates a huge real-time saving. Details of our implementation and some optimisations are given in Section~\ref{sec results} below.

\section{Approximate eigenvectors from corner transfer matrices}

So far, we have not described how to calculate the $F$ matrices, and this is clearly critical in order to obtain a good upper bound. We need to choose the matrices in such a way that the approximate eigenvector $\psi$ given by \Ref{eq:psi} is close to the true eigenvector of $V^o_m$. We accomplish this through the use of corner transfer matrix formalism, which we briefly outline here; we direct the reader to a more detailed discussion in~\cite{lowerbounds}.

The expression \Ref{eq:psi} is the starting point of Baxter's corner transfer matrix formalism. It can be represented pictorially by considering $F(a,b)$ in the limit $n \rightarrow \infty$ as a half-row transfer matrix; it takes a half-infinite row of spins ending with spin $a$, and transfers it along one row to a half-infinite row of spins ending with spin $b$. Using this representation, we can see that the expression for $\psi$ represents a half-plane partition function (normalised in some way), which is unchanged under the action of $V^o_m$ save for a constant factor --- \textit{i.e.}, it is an eigenvector of $V^o_m$. This representation is shown in Figure \ref{fig:VandF}.

These infinite-dimensional half-row transfer matrices can be shown \cite{CTM:1,lowerbounds} to satisfy the \emph{CTM equations}
\begin{align}
  \xi A^2(a) & = \sum_b F(a,b) A^2(b) F(b,a), \\
  \eta A(a) F(a,b) A(b) & = \sum_{c,d} \omega\abcd F(a,c) A(c) F(c,d) A(d) F(d,b),
\end{align}
where $A(a)$ are matrices of the same size as $F$, indexed by a single spin value. The $A$ matrices are the eponymous \emph{corner transfer matrices}; they transfer a half-infinite row of spins ending with spin $a$ around an angle of $\pi/2$, sweeping out a quarter of the lattice.
The CTM equations can then be interpreted as equating half-plane transfer matrices (see Figure \ref{fig:XandY}). The matrices on either side of the equality differ only by an extra row, which results in the scalar multipliers $\xi$ and $\eta$.

\begin{figure}
  \begin{center}
    \includegraphics[width=\textwidth]{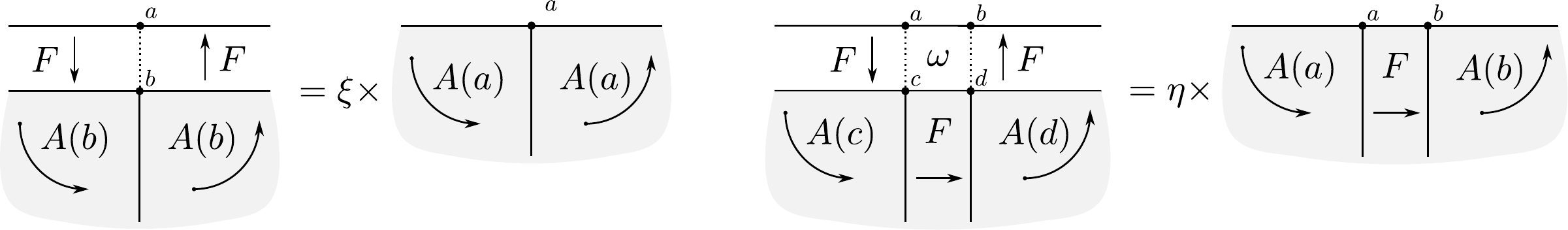}
  \end{center}
  \caption{Pictorial interpretation of the CTM equations.}
  \label{fig:XandY}
\end{figure}

In the infinite-dimensional limit, the CTM equations directly yield the solution of the model via $\kappa = \eta/\xi$. We can approximate the true solution by taking $F$ matrices of finite size $n \times n$. The CTM equations can be solved for finite-dimensional matrices, and we use these finite sized solutions for our $F$ matrices in our bound finding procedure.

Solving the CTM equations at finite size is quite non-trivial. For this purpose, we use the 
corner transfer matrix renormalisation group method developed by Nishino and Okunishi 
\cite{CTM:RG1,CTM:RG2}. In this method, we start with some trial values for $A(a)$ and $F(a,b)$ which are then ``polished'' into solutions.
We do this by expanding $A$ and $F$ into the $2n \times 2n$ matrices $A_l$ and $F_l$, using the block matrix equations \Ref{eq:fl} and
\begin{align}
  \left.A_l(c)\right|_{d,a} & = \sum_b \omega\abcd F(d,b) A(b) F(b,a).
\end{align}
These can be interpreted as ``larger'' versions of the corner and half-row transfer matrices (see Figure 
\ref{fig:enlarge}).

\begin{figure}
  \begin{center}
    \includegraphics[width=0.9\textwidth]{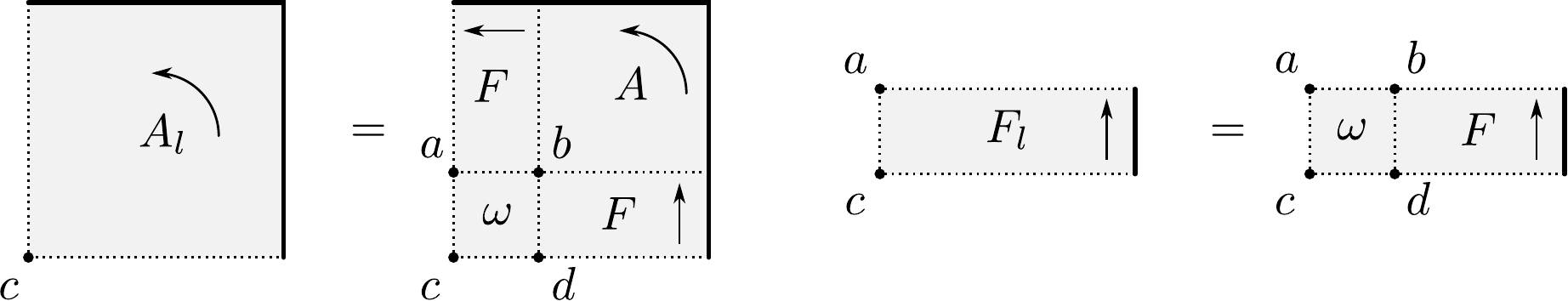}
  \end{center}
  \caption{Pictorial interpretation of $A_l$ and $F_l$.}
  \label{fig:enlarge}
\end{figure}

We then reduce the large matrices to generate new iterations of $A$ and $F$. To do this, 
we diagonalise $A_l(a)$, producing orthogonal diagonalising matrices $P(a)$. We take the 
transformations
\begin{align}
  A_l(a) & \rightarrow P^T(a) A_l(a) P(a), &  F_l(a,b) & \rightarrow P^T(a) F_l(a,b) P(b),
\end{align}
which leave the CTM equations invariant. We then truncate the matrices to the original $n 
\times n$ size by keeping the $n$ largest eigenvalues of $A_l$ and 
performing a consistent truncation on $F_l$. This has the effect of making the matrices intuitively ``cover as large an area as possible'', so that they are close to the infinite-size solution of the CTM equations.

The expanding and reducing procedures are iterated, and it is observed that the matrices eventually converge 
to the finite-size solution of the CTM equations. The initial values for the $A$ and $F$ 
matrices can be taken to be of some small size, typically $1 \times 1$ or $2 \times 2$. 
They can then be ``built up'' to the desired $n \times n$ size by sometimes keeping extra 
eigenvalues at the reduction step, resulting in larger matrices until the desired size is 
reached. We again direct the reader to \cite{lowerbounds} for a more detailed description of this process.

\section{Results}
\label{sec results}
The method was implemented in 2 distinct steps: computing the $F$ matrices, and then computing the trace ratios for all legal states $\bs$. Both parts were implemented in \texttt{C++} using the \texttt{Eigen} numerical linear algebra library \cite{eigenweb}. We used \texttt{Eigen} since it readily supports multiple precision computation through the \texttt{MPFR} library \cite{MPFR}. High precision is necessary in the first step because the eigenvalues of corner transfer matrices range over many orders of magnitude, and in the second step so that the trace ratios are also of high precision --- though less precision is needed. The first step requires only modest computing resources, and all $F$ matrices were generated within a few hours on a modest Linux laptop using only a few megabytes of memory.

The second step --- computing the trace ratios --- can be implemented quite na\"ively and still give good results. However we use a number of simple and very effective optimisations. Firstly, traces are invariant under cyclic permutations, so we only have to consider legal cut states modulo cyclic permutations for $\bs$. Traces are also invariant under transposes, and the $F$ matrices satisfy $F(a,b) = F^T(b,a)$, so we can also disregard reflections. Thus we consider bracelets which are legal for the system (generated using the methods in \cite{Ruskey2000}). This reduces the number of computations by a factor of approximately $2m$. Additionally, we can gain a little more speed by using similarity transforms to diagonalise one of the $F$ matrices. Finally, we split the set of bracelets into batches of equal size across about 300 cores on the WestGrid computing cluster. Each batch (with $n=30, m=50$) took approximately 1 day of CPU time. Note that final upper bounds were computed at high precision and then 
rounded up.

Our results are given in Table \ref{tab:results}. For each of the three models, we computed the bound using $F$ matrices of size $30\times 30$ on cylindrical systems of circumference $50$ vertices. In this table, we also indicate the previous best upper bounds. In Table~\ref{tab:known digits}, we combine these new upper bounds with the best known lower bounds to show the digits of the growth rates which we now know rigorously. We extend the number of rigorously known digits from previous work by between 4 and 6 digits for our models.

We note here that the use of software floating point precision for the trace-ratio computations made our code about 50-100 times slower than if we had been able to use hardware floating point. Unfortunately most commercially available processors, including those we had access to, support 80-bit double-extended precision rather than true 128-bit or higher precision. If such high precision hardware floating point were available, then we could have increased $m$ by about 8, which would perhaps give an extra 2 or 3 digits exactly. The previous best upper bounds \cite{Friedland2010,louidor2010} used machine floating point.

\begin{table}
  \centering
  \begin{tabular}{|l|ll|l|}
  \hline
    Model & $m$ & $n$ & Upper bound \\
    \hline
    Hard squares & 50 & 30 & \underline{1.503 048 082 475 33}2 264 519 \\
    NAK & 50 & 30 & \underline{1.342 643 95}1 124 602 238 \\
    RWIM & 50 & 30 & \underline{1.448 9}57 372 102 \\
\hline 
  \end{tabular}
  \caption{Upper bounds on the growth rates of hard squares, NAK and RWIM. The previous best known bounds \cite{Friedland2010,louidor2010} are underlined.}
  \label{tab:results}
\end{table}

\begin{table}
  \centering
  \begin{tabular}{|l|p{10cm}|l|}
\hline
    Model & Lower bounds \cite{lowerbounds} & Exact digits \\
    \hline
    Hard squares & \underline{1.503 048 082 475 332 264} 322 066 329 475 553 689 385 781 038 610 305 062 028 101 735 933 850 396 923 440 380 463 299 47 & 19 \\
\hline
    NAK &  \underline{1.342 643 951 124 60}1 297 851 730 161 875 740 395 719 438 196 938 393 943 434 885 455 0 & 15 \\
\hline
    RWIM & \underline{1.448 957 37}1 775 608 489 872 231 406 108 136 686 434 371 & 9 \\
\hline
  \end{tabular}
  \caption{Lower bounds on the growth rates of hard squares, NAK and RWIM from \cite{lowerbounds}; combining those lower bounds with the new upper bounds, the underlined digits are known rigorously. Note that the lower bounds agree the best estimates \cite{lowerbounds} in all but their last couple of digits.}
  \label{tab:known digits}
\end{table}

Due to a lack of rigorous results on the theoretical convergence of the CTMRG, it is difficult to predict how good our bounds will be for given $m$ and $n$. Thus we must observe their empirical behaviour for various $m$ and $n$. We first note that $m$ is the computational bottleneck parameter, as the time required is exponential in $m$ (while the time and memory requirements depend polynomially on $n$). Thus we wish to increase $m$ to the limit of our computational power, and then select an optimal $n$ for that value of $m$. We show the results in the context of the hard squares model; other models behave in much the same way.

For any fixed $m$, the bound given by (\ref{eq:upper}) decreases as $n$ increases, but only up to a small limit, after which it stays level. Therefore, given a value of $m$, we can be assured of calculating the best bound for that $m$ as long as $n$ is sufficently large. This is shown in Figure \ref{fig:finite}.
\begin{figure}
  \begin{center}
    \subfigure[Log error in the upper bounds generated by increasing $n$ for $m = 12,24$.]{ \includegraphics[width=0.465\textwidth]{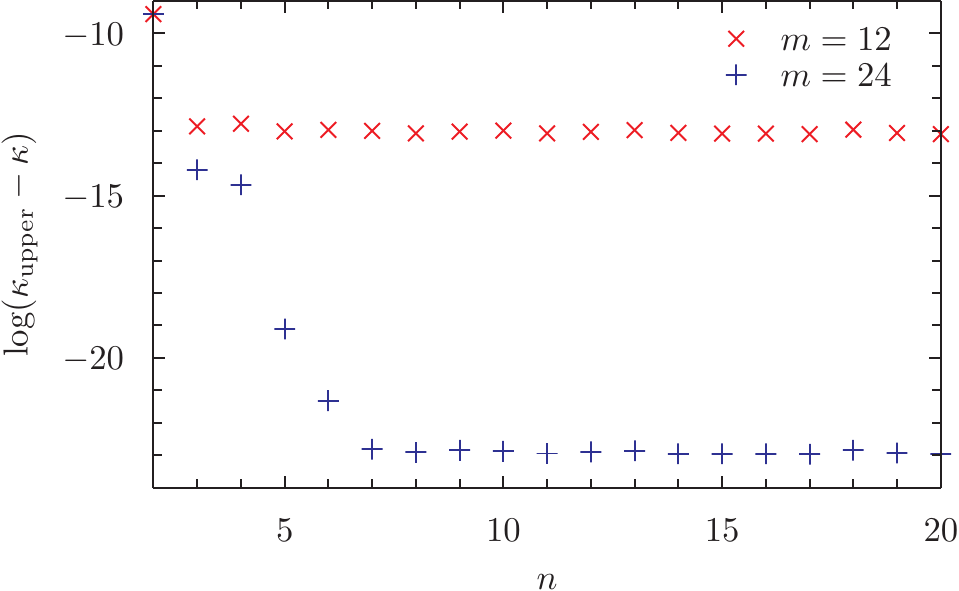} \label{fig:finite} } \hfill
    \subfigure[Optimal $n$ for various $m$. There appears to be a linear relationship with $m$.]{ \includegraphics[width=0.435\textwidth]{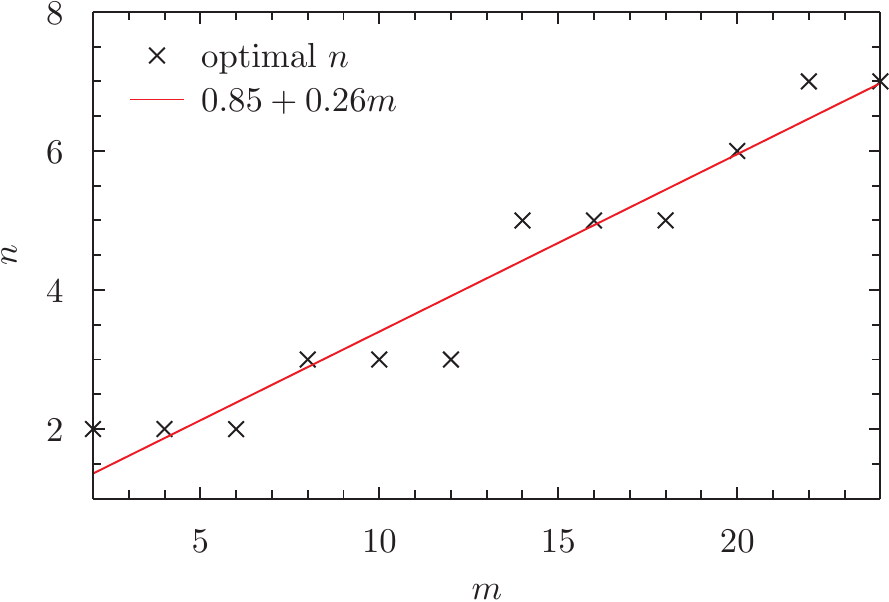} \label{fig:optimal} }
  \end{center}
  \caption{Choosing an $n$ value for a given $m$.}
\end{figure}
The optimal $n$ (\textit{i.e.}, the value of $n$ after which the bound does not decrease further) for any given $m$ appears to follow a linear relationship with $m$. In Figure \ref{fig:optimal}, we show this value for small $m$. For our final bound computation, we were able to run the method for $m = 50$. We estimate by extrapolating the fitted linear relationship that for this value of $m$, the optimal value of $n$ is approximately 14. We have taken $n = 30$, and so are confident that this value is sufficient to achieve the best bound for our value of $m$.

Next, we look at the performance of the bound in relation to $m$. Because our bound is itself an upper bound on $\Lambda^o(m)^{1/m}$, we know that it will be inferior to an explicit evaluation of (\ref{eq:calkinwilf}) for the same $m$. However, for sufficiently large $n$, the difference between (\ref{eq:upper}) and (\ref{eq:calkinwilf}) is very small. This is shown in Figure \ref{fig:error}, where there is almost no discernible difference between the two bounds. Thus we expect that the accuracy of our bound will behave similarly to that of (\ref{eq:calkinwilf}), which is conjectured to decrease to $\kappa$ exponentially with respect to $m$. This behaviour can also be observed in Figure \ref{fig:error}. This means that the algorithm, which is exponential-time in $m$, will also be exponential-time in the number of digits found. This is worse than the observed performance for the lower bound found in \cite{lowerbounds}, although this is not unexpected.

\begin{figure}
  \begin{center}
    \includegraphics[width=0.5\textwidth]{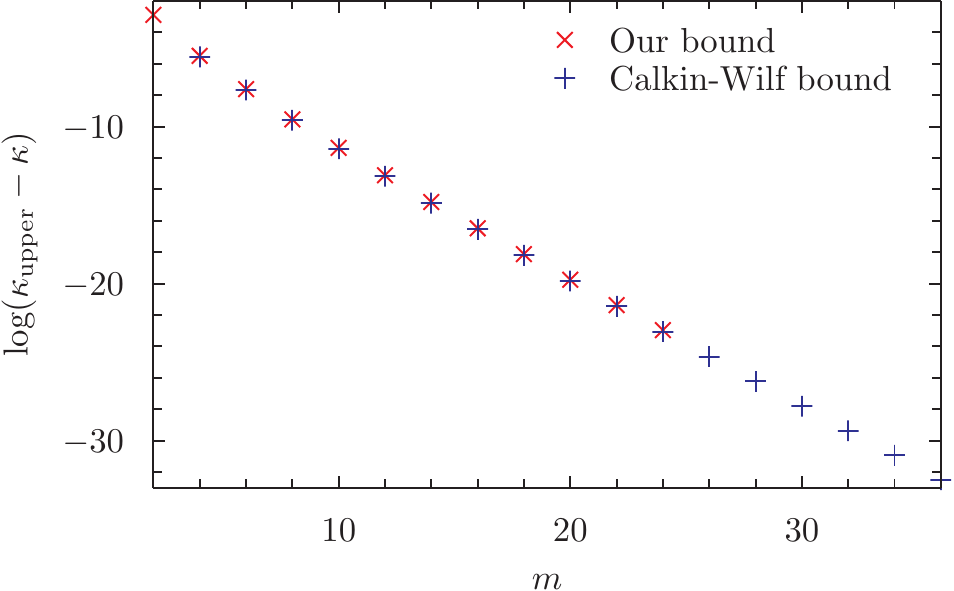}
  \end{center}
  \caption{Log error in the upper bounds generated by (\ref{eq:upper}) (red) and (\ref{eq:calkinwilf}) (blue) for various $m$. Here we used $n = 20$ which always produces the best possible bound for these values of $m$.}
  \label{fig:error}
\end{figure}

As previously mentioned, (\ref{eq:calkinwilf}) is the formula used to generate all previous best known upper bounds. The advantage of our method is that (\ref{eq:upper}) is almost as accurate and much faster to compute, even though both methods are exponential in $m$. For example, in (\ref{eq:calkinwilf}) the eigenvalue $\Lambda^o(36)$ was computed using over 200 gigabytes of memory and 40 days of CPU time (and real time). We were able to push to $m = 50$ using only a few megabytes of memory and about 300 days of CPU time but only 1 day of real time due to parallelisation. Thus we are able to achieve much better bounds.

\section{Conclusion}

In this paper, we have used transfer matrix analysis, the Collatz-Wielandt formula, and corner transfer matrix formalism to derive upper bounds on the growth rates of three lattice models motivated by an information storage problem. Our bounds are a significant improvement on all previously known upper bounds, and together with the corresponding lower bounds from \cite{lowerbounds}, fully determine the growth rates to a generous number of digits.

Our method is not limited to the models we have studied; nearly any lattice model which can be written in terms of local face weights and satisfies some elementary symmetry properties can be analysed in this way. It can be applied directly to colouring models, and it appears possible to exploit symmetries in this case to make it even more efficient. On the other hand, some other models lack irreducible transfer matrices, for example the ``even'' model in \cite{lowerbounds}. We are extending our method to account for these difficulties.

One weakness of our method is that it appears to be exponential-time in the number of digits required, due to the need to evaluate all the ratios in (\ref{eq:upper}) (modulo symmetries). It seems intuitive that much of this is wasted work; since in the limit $n \rightarrow \infty$ it can be shown that the solution of the CTM equations gives the exact eigenvector in (\ref{eq:psi}) and as such, all of the ratios should be very close to each other, and it should be unnecessary to calculate all $2^m$ of them. We observed, for example, that the traces were always positive but we have been unable to prove this. Similarly, we observed that the maximum or minimum trace ratio was frequently given by $\bs=\vac^m$ or $\bs=(\vac\occ)^{m/2}$, but,  again, we have been unable to prove this.  Such results would allow us to compute bounds much more efficiently and we are currently working to prove them.

\section*{Acknowledgements}
The authors would like to thank Ian Enting and Brian Marcus for many helpful and interesting discussions. Additionally they thank WestGrid for providing access to their computer cluster and AR acknowledges financial support from NSERC.

\bibliographystyle{abbrv}
\bibliography{ub_refs}

\end{document}